\newtheorem{theorem}{Theorem}
\newtheorem*{theorem*}{Theorem}
\newtheorem{lemma}[theorem]{Lemma}
\theoremstyle{definition}
\newcommand{\N}{\mathbf{N}}
\newcommand{\R}{\mathbf{R}}
\newcommand{\E}{\mathbf{E}}
\renewcommand{\P}{\mathbf{P}}
\DeclareMathOperator{\Var}{\mathbf{Var}}
\renewcommand{\epsilon}{\varepsilon}
\renewcommand{\theta}{\vartheta}
\newcommand{\sfrac}[2]{{\scriptstyle\frac{#1}{#2}}}
\newcommand{\mfrac}[2]{{\textstyle\frac{#1}{#2}}}
\newcounter{thmlistcnt}
	{\setcounter{thmlistcnt}{0}%
	\begin{list}{\emph{(\roman{thmlistcnt})}}{%
		\usecounter{thmlistcnt}%
		\setlength{\topsep}{0pt}%
		\setlength{\leftmargin}{0pt}%
		\setlength{\itemsep}{0pt}%
		\setlength{\labelwidth}{17pt}
		\setlength{\itemindent}{30pt}}%
	}%
	{\end{list}}%
\newcommand{\sHH}{\scriptstyle \mathrm{H}}
\newcommand{\sT}{\scriptstyle \mathrm{T}}
\newcommand{\HH}{\mathrm{H}}
\newcommand{\T}{\mathrm{T}}
\subjclass[2010]{05A17, Secondary: 60C05}
\begin{document}
\title[A lower bound for the partition function]{A lower bound for the partition function from Chebyshev's inequality applied to a coin flipping model
for the random partition}
\author{Mark Wildon}
\date{\today}

\begin{abstract}
We use a coin flipping model for the random partition and Chebyshev's inequality to prove the
lower bound $\lim \frac{\log p(n)}{\sqrt{n}} \ge C$ for the number of partitions $p(n)$ of $n$,
where $C$ is an explicit constant.
\end{abstract}

\maketitle
\thispagestyle{empty}

A \emph{partition} of a non-negative integer $n \in \N_0$ is a decreasing sequence of natural numbers whose sum is $n$.
Let $p(n)$ be the number of partitions of~$n$. For example, $p(5) = 7$ counts the partitions 
$(5)$, $(4,1)$, $(3,2)$, $(3,1,1)$,
$(2,2,1)$, $(2,1,1,1)$ and $(1,1,1,1,1)$.
In this note we use a model for the random partition
to prove that for all $\epsilon > 0$ there exists $N \in \N$ such that
\[\label{eq:main} \frac{\log p(n)}{\sqrt{n}} >  \frac{\sqrt{8}\log 2}{1 + \epsilon} \quad\text{for all $n \ge N$}.
\tag{$\star$} \]
We end with an explicit  bound that replaces $\sqrt{8} \log 2$ with the slightly smaller constant
$\mfrac{8}{3} \log 2$.  The proof of~($\star$) is self-contained and intended to be readable by anyone knowing the basics
of probability theory.

The asymptotically correct result
is \smash{$\lim_{n \rightarrow \infty} \frac{ \log p(n)}{\sqrt{n}} = 2 \sqrt{\pi^2/6}$}. 
The upper bound
$\log p(n) \le 2 \sqrt{\pi^2/6} \sqrt{n}$ is relatively easy to prove---see for instance Theorem 15.7 in 
\cite{vanLintWilson}---but getting a tight lower bound is much more challenging. A fairly lengthy proof 
using only real analysis was given by Erd{\H{o}}s in~\cite{ErdosPartitions}. Our proof is
motivated by the  model for the random partition 
in~\cite[\S 4.3]{ArratiaTavare}, and by 
the abacus notation for partitions (see \cite[page 79]{JK}). The latter was used in \cite{MarotiLower} to prove
the uniform lower bound $p(n) \ge \mathrm{e}^{2\sqrt{n}}/14$, and in~\cite{WildonAbacus} to prove
the upper bound $\log p(n) \le C(\epsilon) n^{\sfrac{1}{2} + \epsilon}$ for all $\epsilon > 0$.
The novel feature here is to combine these motivations to give a simple proof of~($\star$).

The proof begins with a coin flipping model for the random partition. Using linearity of expectation
it is easy to show
that a partition generated by $m$ flips has expected size about $m^2/8$. Critically, the standard
is of order~$m^{3/2}$. By Chebyshev's inequality, most of the $2^m$ partitions
generated by $m$ coin flips have size within a few standard deviations of $m^2/8$. This leads
easily to the claimed bound.

\subsection*{Coin flipping model}

We represent a partition $\lambda$ of length $\ell$
as the set of \emph{boxes} $\{(i,j) : 1 \le i \le \ell, 1 \le j \le \lambda_i \}$,
forming its \emph{Young diagram}. We draw Young diagrams in  `French notation', so that the box $(i,j)$ 
is
geometrically  a unit square with diagonal from $(i-1,j-1)$ to $(i,j)$.
For example, the partition $(6,4,2,2)$ of length $4$
is shown in Figure~1 above.

%
%
%

\begin{figure}
\begin{center}
\begin{tikzpicture}[x=0.7cm,y=0.7cm]
\draw[thick] (0,0)--(6,0); \draw[thick] (0,1)--(6,1); \draw[thick] (0,2)--(4,2); \draw[thick] (0,3)--(2,3); \draw[thick] (0,4)--(2,4); 
\draw[thick] (0,0)--(0,4); \draw[thick] (1,0)--(1,4); \draw[thick] (2,0)--(2,4); \draw[thick] (3,0)--(3,2); \draw[thick] (4,0)--(4,2); \draw[thick] (5,0)--(5,1); 
\draw[thick] (6,0)--(6,1);
\draw[->] (0,0)--(7.5,0);
\draw[->] (0,0)--(0,4.5);
\node[below left] at (0,0) {$0$}; 
\node[below] at (1,0) {$1$}; \node[below] at (2,0) {$2$}; \node[below] at (3,0) {$3$}; 
\node[below] at (4,0) {$4$}; \node[below] at (5,0) {$5$}; \node[below] at (6,0) {$6$}; \node[below] at (7,0) {$7$}; 
\node[left] at (0,1) {$1$}; \node[left] at (0,2) {$2$}; \node[left] at (0,3) {$3$}; 
\node[left] at (0,4) {$4$}; 
\draw (7,0)--(7,0.2);
\node[above] at (1.5,4) {$\sHH$};
\node[above] at (2.65,2) {$\sHH$};
\node[above] at (3.5,2) {$\sHH$};
\node[above] at (4.65,1) {$\sHH$};
\node[above] at (5.5,1) {$\sHH$};
\node[above] at (6.65,0) {$\sHH$};
\node[right] at (2,3.5) {$\sT$};
\node[right] at (2,2.65) {$\sT$};
\node[right] at (4,1.65) {$\sT$};
\node[right] at (6,0.65) {$\sT$};

\end{tikzpicture}
\end{center}

\vspace*{-8pt}
\caption{The Young diagram of the partition $(6,4,2,2)$ with the corresponding coin flip sequence $\HH\T\T\HH\HH\T\HH\HH\T\HH$.}
\end{figure}

Let $\Omega = \{ \HH, \T \}^m$ be the probability space for $m$ flips of an unbiased coin
in which each $\omega \in \Omega$ has equal probability $\frac{1}{2^m}$.
Given $\omega \in \Omega$ with exactly~$\ell$ tails, 
we define the boundary of a corresponding partition
$p(\omega)$ of length~$\ell$ 
as follows. Start at $(0, \ell)$ and step right to $(1,\ell)$. Then for each head, step one unit right,
and for each tail,
step one unit down. For instance if $m = 10$ and $\omega = \HH\T\T \HH\HH \T \HH\HH \T \HH$ 
then $p(\omega) = (6,4,2,2)$;
 the final head corresponds to a step from $(6,0)$ to $(7,0)$ that is not part of a geometric~box.

Let $N$ be the size of $p(\omega)$ and let $X_t$ be the number of heads up to and including flip $t$.
Let $Y = m - X_m$ be the total number of tails; this is the length of $p(\omega)$. 
A move down at step $t$ adds $X_{t-1} + 1$ boxes to the Young diagram. Therefore setting
\[ C_t = \begin{cases} X_{t-1} & \text{if $\omega_t = \T$} \\
0 & \text{if $\omega_t = \HH$.}\end{cases} \]
we have $N = Y + \sum_{t=1}^m C_t$. 

\subsection*{Expectation and variance}
Since $X_t$ is distributed binomially as $\mathrm{Bin}(t,\mfrac{1}{2})$, we have $\E[X_t] = t/2$ and $\Var X_t = t/4$. Hence
$\E[Y] = m/2$ and $\Var Y = m/4$.
Observe that $C_t = 0$ unless flip $t$ is tails. Conditioning on this event shows that
$\E[C_t] = \frac{1}{2}\E[X_{t-1}]
= \frac{t-1}{4}$. Hence, by linearity of expectation, $\E[N] = \E[Y] + \sum_{t=1}^m \E[C_t] 
= m/2 + \frac{1}{4}
\sum_{t=1}^m (t-1) = m/2 + m(m-1)/8 = m(m+3)/8$.

\begin{lemma}\label{lemma:CXuncorr}
If $t \le u$ then the random variables $C_t$ and $X_u$ are uncorrelated.
\end{lemma}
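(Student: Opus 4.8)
\emph{Proof proposal.} The plan is to show directly that $\Covar(C_t, X_u) = \E[C_t X_u] - \E[C_t]\E[X_u] = 0$.

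First I would note that $C_t$ depends only on the first $t$ coin flips $\omega_1, \ldots, \omega_t$. Splitting $X_u = X_t + (X_u - X_t)$, the increment $X_u - X_t$ counts the heads among the flips $\omega_{t+1}, \ldots, \omega_u$ and is therefore independent of $\omega_1, \ldots, \omega_t$, hence of $C_t$. Thus $\Covar(C_t, X_u - X_t) = 0$, and it suffices to treat the case $u = t$, that is, to show $\Covar(C_t, X_t) = 0$.

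For this case I would condition on $\omega_t$. If $\omega_t = \HH$ then $C_t = 0$; if $\omega_t = \T$ then flip $t$ contributes no head, so $X_t = X_{t-1}$ and $C_t = X_{t-1}$, giving $C_t X_t = X_{t-1}^2$. Since $\omega_t$ is independent of $X_{t-1}$, this yields $\E[C_t X_t] = \frac{1}{2}\E[X_{t-1}^2]$. Now $\E[X_{t-1}^2] = \Var X_{t-1} + (\E[X_{t-1}])^2 = \frac{t-1}{4} + \frac{(t-1)^2}{4} = \frac{t(t-1)}{4}$ by the binomial moments already recorded, so $\E[C_t X_t] = \frac{t(t-1)}{8}$. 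On the other hand $\E[C_t]\E[X_t] = \frac{t-1}{4}\cdot\frac{t}{2} = \frac{t(t-1)}{8}$, so $\Covar(C_t, X_t) = 0$, as required.

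I do not anticipate a genuine obstacle here: once the problem is reduced to $u = t$, the rest is short bookkeeping with the binomial mean and variance. The one point to state carefully is the independence invoked in the first step — that the heads occurring strictly after flip $t$ are independent of $C_t$ — which follows immediately from the independence of the coin flips.
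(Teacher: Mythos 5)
Your proof is correct and takes essentially the same approach as the paper: both condition on flip $t$ being tails (so that $C_t$ becomes $X_{t-1}$) and exploit the independence of the flips after $t$ from those up to $t$. Your preliminary reduction to the case $u=t$ via $\Covar(C_t, X_u - X_t) = 0$ is just a bilinear repackaging of the paper's step of writing $X_u = X_{t-1} + W$ inside the conditional expectation, and the remaining moment bookkeeping is identical.
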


\begin{proof}
Again we  condition on the event that flip $t$ is tails. In this 
event, $C_t = X_{t-1}$ and $X_u = X_{t-1} + W$, where $W$ is the number
of heads between flips $t+1$ and~$u$, inclusive. Since $W$ is independent of $X_{t-1}$,
\allowdisplaybreaks
\begin{align*} 
\E[C_t X_u] &= \frac{1}{2} \E[X_{t-1}(X_{t-1} + W)] 
\\ &= \frac{1}{2} \bigl( \E[X_{t-1}^2] + \E[X_{t-1}] \E[W]\bigr) 
\\ &= \frac{1}{2} \bigl( \Var X_{t-1} + \E[X_{t-1}]^2 + \E[X_{t-1}]\E[W] \bigr) 
\\ &= \frac{1}{2} \left( \frac{t-1}{4} + \Bigl( \frac{t-1}{2} \Bigr)^2 + \Bigl(\frac{t-1}{2} \Bigr) \Bigl(\frac{u-t}{2}\Bigr) \right) 
\\ &= \frac{1}{2} \Bigl( \frac{t-1}{4} \Bigr) \bigl( 1 + t-1 + u-t \bigr) 
\\ &= \E[C_{t-1}] \E[X_u] \end{align*}
as required.
\end{proof}

As a corollary, using a similar conditioning argument, we find that
\[ \E[C_t C_u] = \frac{1}{2} \E[X_{t-1} C_u ] = \frac{1}{2} \E[X_{t-1}] \E[C_u] = \E[C_t][C_u] \]
whenever $t < u$. Hence $C_t$ and $C_u$ are uncorrelated for distinct $t$ and $u$.
This is perhaps a little surprising, since the inequality $C_t \le C_u$ for $t < u$ shows
that they are  not independent, in general.
A final conditioning argument shows that
$\Var C_t = \E[C_t^2] - \E[C_t]^2 = \frac{1}{2} \E[X_{t-1}^2] - \frac{1}{4} \E[X_{t-1}]^2$,
and so
\[ \begin{split} \Var C_t = \frac{1}{2} \Var X_{t-1} + \frac{1}{4} \E[X_{t-1}]^2 &= 
\frac{1}{2} \Bigl( \frac{t-1}{4} \Bigr) + \frac{1}{4} \Bigl( \frac{t-1}{2} \Bigr)^2 
\\ &\qquad = 
\Bigl( \frac{t-1}{16} \Bigr) \bigl( 2 + t-1 \bigr) = \frac{t^2-1}{16}. \end{split} \]
By Lemma~\ref{lemma:CXuncorr},
$\E[C_t Y] = \E[C_t (m-X_m)] = \E[C_t] \E[m-X_m] = \E[C_t] \E[Y]$
for all $t$. Hence $C_t$ and $Y$ are also uncorrelated.
If
$Z$ and $Z'$ are uncorrelated random variables then, by a one-line calculation, $\Var (Z+Z') = \Var Z + \Var Z'$.
We therefore have
$\Var N = \Var Y + \sum_{t=1}^m \Var C_t$ 
and so
\[ \Var N = \frac{m}{4} + \sum_{t=1}^m \frac{t^2-1}{16} = \frac{3m}{16} + \frac{m(m+1)(2m+1)}{96} =
\frac{m^3}{48} + \frac{m^2}{32} + \frac{19m}{96}. \]
Critically $\Var N$ is cubic in $m$, not quartic as one might naively expect.
To simplify calculations, we use the crude upper bound $m^3/48 + m^2/32 + 19m/96 \le 2m^3/96 + 4m^3 / 96 = m^3/16$
for $m \ge 3$ to get $\Var N \le m^3 /16$.

\subsection*{Lower bound}
The concentration of measure estimate in Chebyshev's inequality
\[ \P\Bigl[ \, \bigl| Z- \E[Z] \bigr| \ge d \sqrt{\Var Z}\, \Bigr] \le \frac{1}{d^2} \]
implies that
\[ \P\Bigl[ \bigl| N - \frac{m(m+3)}{8} \bigr| \ge d \frac{m^{3/2}}{4} \Bigr] \le \frac{1}{d^2} \]
for $m \ge 3$ and any $d > 0$.

The probability space $\Omega$ has $2^m$ elements. The proportion giving partitions
with $\bigl| N - m(m+3)/8 \bigr|
< d m^{3/2} /4$ is more than $1-1/d^2$. Since distinct coin flip sequences
give distinct partitions, it follows that
\[ \sum_{n} p(n) > 2^m \bigl( 1 - \frac{1}{d^2} \bigr) \]
where the sum is over all $n \in \N_0$ such that 
$| n - m(m+3)/8 | < d m^{3/2} /4$. Since $p(n)$ is monotonic, we deduce that, for $m \ge 3$,
\[ p\Bigl( \frac{m(m+3)}{8}   + d \frac{m^{3/2}}{4} \Bigr) > \frac{2^m}{d m^{3/2}} 2\bigl( 1 - \frac{1}{d^2} \bigr), \]
where we extend the domain of $p$ to $\R$ by setting $p(x) = p(\lfloor x \rfloor)$.
The function $d\mapsto \frac{1}{d}\bigl(1 -\frac{1}{d^2} \bigr)$ is maximized when $d = \sqrt{3}$,
where it has value \smash{$\mfrac{2}{3\sqrt{3}}$}. Therefore  we take $d = \sqrt{3}$. 
Let $\eta > 0$ be given.
Provided $m$ is sufficiently
large we have $3m/8 + \sqrt{3}\hskip1pt m^{3/2} / 4 < \eta m^2/8$.
Hence
\[ \tag{$\dagger$} p\Bigl( \frac{m^2}{8} (1+\eta) \Bigr) > 2^m \frac{4}{3\sqrt{3} m^{3/2}}  \]
for all $m$ sufficiently large. Setting $n = m^2(1+\eta)/8$ and taking logs we obtain
\[ \log p(n) \ge \sqrt{\frac{8n}{1+\eta}} \log 2 - \frac{3}{2}  \log \frac{8n}{1+\eta}
+ \log \frac{4}{3\sqrt{3}} \]
for all $n$ sufficiently large.
Since
$(\log n) / \sqrt{n} \rightarrow 0$ as $n \rightarrow \infty$ 
it follows that for all $\epsilon > 0$, 
\[ \frac{ \log p(n)}{\sqrt{n}} > \frac{\sqrt{8} \log 2}{1 + \epsilon} \]
for all $n$ sufficiently large, as claimed in~\eqref{eq:main}. The constant on the right-hand side is approximately $1.961$, somewhat 
lower
than the asymptotically correct $2 \sqrt{\pi^2/6} \approx 2.565$.
For a concrete lower bound, take
 $\eta = \frac{1}{8}$ and
$m = 8\sqrt{n}/3$ in ($\dagger$) to get
$p(n) \ge 2^{8\sqrt{n}/3} / 2^{5/2} n^{3/4}$ 
for all $n$ sufficiently large. (One can easily check that $ n\ge 10^6$ suffices.)
Using a computer to check small cases one can show that in fact
\[ p(n) \ge \frac{2^{8\sqrt{n}/3}}{2^{5/2} n^{3/4}} \quad\text{for all $n \ge 2$.} \] 


%
%
%


\enlargethispage{9pt}
\smallskip
\providecommand{\href}[2]{#2}
\renewcommand{\MR}[1]{\relax{} }



\end{document}